\newtheorem{thm}{Theorem}[section]
\newtheorem{lem}{Lemma}[section]
\newtheorem{cor}{Corollary}[section]
\newtheorem*{thmA}{McMullen's Theorem}
\newcommand{\omC}{\widehat{\mathbb{C}}}
\newcommand{\ra}{\rightarrow}
\newcommand{\pa}{\partial}
\newcommand{\sm}{\setminus}
\newcommand{\wh}{\widehat}
\newcommand{\mc}{\mathcal}
\makeatletter\@addtoreset{equation}{section}\makeatother 
\titleformat{\section}{\large}{\textbf{\thesection.}}{1em}{\textbf}
\titleformat{\subsection}{\normalsize}{\textbf{\thesubsection.}}{1em}{\textbf}
\titleformat{\subsubsection}{\normalsize}{\thesubsubsection.}{1em}{\textbf}
\begin{document}
\title{Wandering Julia components \\ of cubic rational maps}
\author{Guizhen Cui}
\address{Guizhen Cui, School of Mathematical Sciences, Shenzhen University, Shenzhen, 518052, P. R. China, \& Academy of Mathematics and Systems Science Chinese Academy of Sciences, Beijing, 100190, P. R. China}
\email{gzcui@szu.edu.cn}
\author{Wenjuan Peng}
\address{Wenjuan Peng, Academy of Mathematics and Systems Science Chinese Academy of Sciences, Beijing, 100190, P. R. China}
\email{wenjpeng@amss.ac.cn}
\author{Luxian Yang}
\address{Luxian Yang, School of Mathematical Sciences, Shenzhen University, Shenzhen, 518052, P. R. China}
\email{lxyang@szu.edu.cn}
	
\thanks{The first author is supported by National Key R\&D Program of China no.2021YFA1003203, and the NSFC Grants No. 12131016 and No. 12071303. The second author is supported by the NSFC Grants No. 12122117 and No. 12288201. The third author is supported by Basic and Applied Basic Research Foundation of Guangdong Province Grants No.2023A1515010058. }
\subjclass[2000]{}

\begin{abstract}
We prove that every wandering Julia component of cubic rational maps eventually has at most two complementary components.
\end{abstract}
\maketitle

\section{Introduction}
Let $f$ be a rational map of the Riemann sphere $\omC$ with degree $\deg f\geq2$. The Fatou set $F_f$ of $f$ is defined as the point $z\in\omC$ whose iteration sequence $\{f^n\}_{n =1}^{\infty}$ is a normal family in a neighborhood of $z$. The Julia set $J_f$ of $f$ is the complement of the Fatou set. Refer to \cite{milnor2006dynamics,beardon1991iteration} for their basic properties.

A connected component of $F_f$ is called a Fatou domain. A Fatou domain is eventually periodic if its forward orbit has finitely many components. Otherwise it is wandering. The well-known Sullivan Non-wandering Theorem says that every Fatou domain of a rational map is eventually periodic \cite{sullivan1985quasiconformal}. Moreover, the connectivity of a periodic Fatou domain equals $1,2$ or $\infty$. By using quasiconformal surgeries, Shishikura proved that the number of periodic cycles of Fatou domains is at most $2\deg f-2$ \cite{shishikura1987quasiconformal}.

Similarly, a connected component of $J_f$ is eventually periodic if its forward orbit has finitely many components, and wandering otherwise. There are countably many eventually periodic Julia components and uncountably many wandering Julia components (See \cite{milnor2006dynamics, mcmullen1988automorphisms, pilgrim2000rational}). For each non-trivial periodic Julia component $K$ of $f$ with period $p\ge 1$, McMullen proved that there exists a rational map $g$ of $\deg g\ge 2$ such that$(f^p,K)$ is quasiconformally conjugate to $(g,J_g)$ \cite{mcmullen1988automorphisms}. So the study of periodic Julia components is in some sense converted to the study of connected Julia sets.

For wandering Julia components, some results have been obtained for special classes of rational maps. Branner and Hubbard proved that wandering Julia components of cubic polynomials are points \cite{branner1992iteration}. This result is extended to polynomials of any degree by Yin and Qiu, Kozlovski and van Strien independently \cite{qiu2009proof, kozlovski2009local}. For rational maps, McMullen constructed examples of hyperbolic rational maps with wandering Jordan curves as Julia components \cite{mcmullen1988automorphisms}. In fact, Pilgrim and Tan proved that for geometrically finite rational maps, every wandering Julia components are either points or Jordan curves \cite{pilgrim2000rational}.

For general rational maps, the first attempt to study the topology of wandering Julia components was done by Cui, Peng and Tan \cite{cui2011topology}. They proved that if a rational map has at most one full wandering Julia component containing critical values, then every wandering Julia component eventually has at most two complementary components. Here a compact set in $\omC$ is {\bf full} if its complement is connected. In this paper, we study the wandering Julia components of cubic rational maps. Our main result is

\begin{thm}\label{thm:main}
Let $f:\omC\ra\omC$ be a cubic rational map with disconnected Julia set. Suppose that $K$ is a wandering Julia component. Then for sufficiently large integer $n\ge 0$, $\omC\sm f^n(K)$ has at most two components.
\end{thm}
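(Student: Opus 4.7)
The plan is to reduce Theorem \ref{thm:main} to the theorem of Cui, Peng, and Tan \cite{cui2011topology} recalled in the introduction, which gives the same conclusion under the hypothesis that $f$ has at most one full wandering Julia component containing a critical value. I will call such a component \emph{heavy}. The entire proof then reduces to the claim that a cubic rational map has at most one heavy wandering Julia component.

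To prove this claim I would argue by contradiction: assume $K_1\neq K_2$ are heavy wandering Julia components. For each $i$ write $U_i=\omC\sm K_i$ for the unique complementary Jordan disk, and let $c_i$ be a critical point of $f$ with $f(c_i)\in K_i$. The plan is to extract enough critical points from the dynamics associated with $K_1$ and $K_2$ to exceed the bound $2\deg f-2=4$. The basic mechanism is Riemann--Hurwitz applied to the restrictions $f:f^{-1}(U_i)\to U_i$: heaviness of $K_i$ forces branched covering over $U_i$, and each such branching consumes critical points sitting on the preimage tree $\bigcup_{n\ge 0}f^{-n}(K_i)$.

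The heart of the argument, and what I expect to be the main obstacle, is the critical-point bookkeeping. One must separate two cases: if $K_1,K_2$ lie in distinct grand orbits, the two preimage trees are disjoint collections of Julia components, so critical points charged to $K_1$ are automatically distinct from those charged to $K_2$; if they lie in a common grand orbit, the fact that \emph{both} are heavy---each containing its own critical value of $f$, not merely an iterate of one---forces two distinct critical points along one orbit. In either case one must show that fullness of $K_i$ (as opposed to having several complementary components) is what boosts the count by an extra critical point per heavy component, so that the two components together consume at least $5$ critical points. Once this count is carried out and the desired contradiction obtained, the hypothesis of the Cui--Peng--Tan theorem is verified in the cubic case and Theorem \ref{thm:main} follows.
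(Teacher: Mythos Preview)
Your reduction to the Cui--Peng--Tan theorem alone has a genuine gap: the claim that a cubic rational map with disconnected Julia set has at most one heavy wandering Julia component is neither proved in the paper nor, as far as I can see, true in general, and your counting sketch does not establish it. Two heavy components $K_1,K_2$ cost only two critical values (one in each $K_i$); nothing in ``fullness'' alone manufactures three further critical points. Your Riemann--Hurwitz step is also aimed at the wrong target: the critical value $f(c_i)$ lies in $K_i$, so the branching is over $K_i$, not over $U_i=\omC\sm K_i$, and the map on complementary disks gives no extra critical points there. Even granting a separate argument (which you do not invoke) that at least two critical values must lie in $F_f$ or in periodic Julia components, you reach $2+2=4$, which is exactly the cubic budget and not a contradiction.

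The paper's proof proceeds differently and uses genuinely new input beyond \cite{cui2011topology}. First it proves Theorem~\ref{thm:disconnected}: for any rational map with disconnected Julia set, at least two critical values lie in $F_f$ or in periodic Julia components; hence in the cubic case at most two critical values lie in wandering Julia components. Then it splits into a dichotomy. If those (at most two) critical values lie in a single full wandering Julia component, the Cui--Peng--Tan hypothesis is met and Corollary~\ref{cor:onefull} applies. If they lie in two distinct full wandering Julia components, then \emph{each} such component contains at most one critical value, and the paper invokes its new Theorem~\ref{thm:2}, which strengthens Cui--Peng--Tan by replacing ``at most one full wandering Julia component containing critical values'' with ``each full wandering Julia component contains at most one critical value''. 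The substantial work of \S2 (Lemmas~\ref{lem:full} and \ref{lem:full2}) is precisely what handles this second branch, and your plan bypasses it without a substitute.
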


It is known that wandering Julia components of quadratic rational maps must be points \cite{milnor2000rational}, and cubic rational maps may have wandering Jordan curves as Julia components \cite{godillon2015family}.

Our proof is based on the argument in \cite{cui2011topology}, where they studied the puzzle pieces formed by a wandering Julia component whose iterated images always have at least three complementary components. By developing this argument, we prove that if each full wandering Julia component contains at most one critical value, the conclusion in Theorem \ref{thm:main} is true. On the other hand, we prove that a rational map with disconnected Julia set has at least two critical values in the union of the Fatou set together with the periodic Julia components. By enumerating the number of critical values, we complete the proof of Theorem \ref{thm:main}.

\section{Configurations of wandering Julia components}
We will prove the next result in this section.

\begin{thm}\label{thm:2}
Let $f$ be a rational map with disconnected Julia set. Suppose that each full wandering Julia component of $f$ contains at most one critical value. Then for any wandering Julia component $K$ of $f$, $\omC\sm f^n(K)$ has at most two components as $n\ge 0$ is large enough.
\end{thm}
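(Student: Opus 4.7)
The plan is to argue by contradiction and extend the puzzle-piece analysis of Cui--Peng--Tan \cite{cui2011topology}. Suppose some wandering Julia component $K$ has the property that $\omC\sm f^n(K)$ has at least three components for every sufficiently large $n$. After replacing $K$ by a forward image, I may assume this holds for all $n\ge 0$. Write $K_n:=f^n(K)$ and let $U_n^1,\dots,U_n^{k_n}$ with $k_n\ge 3$ denote the components of $\omC\sm K_n$; these are the puzzle pieces associated to the orbit of $K$.

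The first step is to record the pullback structure. Each $U_n^i$ is mapped by $f$ onto a unique component of $\omC\sm K_{n+1}$, so the restriction $f\colon U_n^i\to U_{n+1}^{\sigma(i)}$ is a proper branched cover. Applying Riemann--Hurwitz to these restrictions relates the local degrees, the numbers $k_n$, and the distribution of critical values of $f$ among the pieces. Because $K$ is wandering, the sets $K_n$ are pairwise disjoint, while $f$ has only $2\deg f-2$ critical points in total; together these two facts constrain how the combinatorial type of the puzzle-piece family can evolve with $n$.

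The second step, following \cite{cui2011topology}, identifies which wandering Julia components must carry critical values. Whenever the pullback groups several $U_n^i$'s together with intermediate pieces of the Julia set inside a single component of $\omC\sm K_{n+1}$, these intervening pieces are wandering Julia components that are \emph{full}, and Riemann--Hurwitz forces each of them to contain at least one critical value. The hypothesis of the theorem allows each such full wandering Julia component to carry at most one critical value. Under the stronger hypothesis of \cite{cui2011topology} (at most one full wandering component contains any critical value at all) the contradiction was immediate after enumerating a handful of pieces; here I must instead show that the backward tree of puzzle pieces produces infinitely many \emph{distinct} full wandering Julia components forced to carry critical values, eventually exceeding the budget $2\deg f-2$.

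The main obstacle is precisely this refined counting. Because many full wandering components can now absorb critical values (one each), the pigeonhole argument of \cite{cui2011topology} no longer suffices, and a component that has already used up its single allowed critical value could in principle reappear at a deeper level. The technical heart of the proof will therefore be a bookkeeping lemma showing that the persistence of $k_n\ge 3$, combined with the wandering hypothesis, forces \emph{fresh} full wandering Julia components to appear as we descend the backward tree of pullbacks. Proving that a full wandering Julia component carrying its allowed critical value cannot be revisited as the boundary of a newly created grouping deeper in the tree, so that the critical-value tally strictly grows without bound, is the step I expect to be hardest.
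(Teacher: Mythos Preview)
Your approach diverges substantially from the paper's, and the gap you yourself flag as ``hardest'' is, I believe, fatal to this line of attack. You want to show that the persistence of $k_n\ge 3$ forces infinitely many \emph{distinct} full wandering Julia components, each required by Riemann--Hurwitz to carry a critical value, thereby exceeding the budget $2\deg f-2$. But there is no mechanism for this: since $f$ has only finitely many critical values, at most finitely many full wandering Julia components can contain one, and nothing in your setup prevents the same handful of components from recurring at every level of the backward tree. Your proposed ``bookkeeping lemma'' asserting that fresh components must appear is essentially the whole theorem restated, and you offer no idea for why a component already carrying its allotted critical value cannot simply be revisited.

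The paper does not proceed by counting critical values at all. It refines the machinery of \cite{cui2011topology} as follows. The orbit $\{F^n(K_0)\}$ under the first return map to $\mc A$ is partitioned into finitely many equivalence classes $\mc A_i(K_0)$ according to $\wh K\cap X$, and for each class one forms the nested intersection $E_i(K_0)=\bigcap_{K\in\mc A_i(K_0)}V(K)$. The earlier paper already shows that at least \emph{two} of the $E_i$ are full wandering Julia components; this alone gives Corollary~\ref{cor:onefull}. The new ingredient here is Lemma~\ref{lem:full2}: under the hypothesis that each full wandering component carries at most one critical value, \emph{every} $E_i$ is a full wandering Julia component or a single point. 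Its proof is a combinatorial no-cycle argument (an auxiliary relation $\rhd$ on the index set, Claims~1 and~2), not a pigeonhole count. Once every $E_i$ is known to contain exactly one critical value, it follows that $V(F^n(K_0))$ contains exactly one critical value for all large $n$, and hence $f^{m_k}\colon\wh K\to\wh{F^k(K)}$ is proper for every $k\ge 1$, which is the contradiction. The key idea you are missing is to upgrade ``at least two $E_i$'' to ``all $E_i$'', rather than to manufacture an unbounded supply of components.
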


Here we mention that if a Julia component $K$ is full, then $f^n(K)$ is full for all $n\ge 1$. If $\omC\sm f(K)$ has $k\ge 2$ components, then $\omC\sm K$ has at least $k\ge 2$ components. It is known that there are examples of rational maps with wandering Julia components $K$ which have more than two complementary components, but $\omC\sm f^n(K)$ has at most two components as $n\ge 0$ is large enough \cite{cui2011topology}.

\vskip 0.24cm
The proof of Theorem \ref{thm:2} is based on the argument in \cite{cui2011topology} which is stated in the following. Let $f$ be a rational map with disconnected Julia set. Since $f$ always has fixed points, we may assume that $f(\infty)=\infty$. Let $V_f$ be the set of critical values of $f$ and $X=V_f\cup\{\infty\}$. Denote by
\begin{itemize}
\item $\mc{C}$: the collection of wandering Julia components $K$ of $f$ such that $\omC\sm f^n(K)$ has at least three components for all $n\ge 0$.
\item $\mc{S}\subset\mc{C}$: $K\in\mc{S}$ if either $K$ intersects $X$ or $\omC\sm K$ has at least three components intersects $X$.
\item $\mc{D}\subset\mc{C}$: $K\in\mc{D}$ if $f^n(K)\in\mc{C}\sm\mc{S}$ for all $n\ge 0$ and $\omC\sm K$ has exactly one component intersecting $X$.
\item $\mc{A}\subset\mc{C}$: $K\in\mc{A}$ if $f^n(K)\in\mc{C}\sm\mc{S}$ for all $n\ge 0$ and $\omC\sm K$ has exactly two components intersecting $X$.
\end{itemize}

It is clear that $\#\mc{S}\le 2\# X-2$. Thus for any $K\in\mc{C}$, $f^n(K)\in\mc{C}\sm\mc{S}$ as $n\ge 0$ is large enough. Hence $\#(\mc{D}\cup\mc{A})=\infty$ if $\mc{C}\neq\emptyset$. By definition, $f(\mc{D}\cup\mc{A})\subset\mc{D}\cup\mc{A}$.

\vskip 0.24cm
For any $K\in\mc{S}$, denoted by $\wh{K}$ the union of $K$ together with all bounded components of $\omC\sm K$. Then $\wh{K}$ is a bounded full continuum. If $K\in\mc{D}$, then $\wh{K}$ is disjoint from $V_f$.

\begin{lem}\label{lem:A}
For any $K\in\mc{C}$, $\{f^n(K)\}_{n\ge 0}$ visits $\mc{A}$ infinitely many times.
\end{lem}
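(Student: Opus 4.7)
The plan is an argument by contradiction. Since $\#\mc{S}\le 2\#X-2$ is finite, at most finitely many iterates $f^n(K)$ belong to $\mc{S}$, so by replacing $K$ with a sufficiently late iterate I may assume every $K_n:=f^n(K)$ lies in $\mc{C}\sm\mc{S}=\mc{D}\cup\mc{A}$. If the orbit visits $\mc{A}$ only finitely often, a further replacement yields $K_n\in\mc{D}$ for every $n\ge 0$, and the task becomes to derive a contradiction from this permanent membership in $\mc{D}$.

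The first step is to extract a sheet structure. For every $n\ge 0$, $K_{n+1}\in\mc{D}$ forces $V_f\cup\{\infty\}\subset U_{n+1}:=\omC\sm\wh{K}_{n+1}$, so the full continuum $\wh{K}_{n+1}$ is disjoint from $V_f$. One can choose a simply connected open neighborhood $N$ of $\wh{K}_{n+1}$ still disjoint from $V_f$; then $f:f^{-1}(N)\to N$ is an unbranched covering of degree $3$ over a simply connected base, and hence splits into three disjoint sheets. This gives
\[
f^{-1}(\wh{K}_{n+1}) = \wh{K}_n\sqcup\wh{L}_n\sqcup\wh{M}_n,
\]
with each summand a full continuum mapped homeomorphically onto $\wh{K}_{n+1}$ via $f$. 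A direct bookkeeping on bounded complementary components shows that the sheet through $K_n$ is $\wh{K}_n$ itself, that the number of complementary components of $K_n$ is constant along the orbit, and that the siblings $\wh{L}_n,\wh{M}_n$ lie in $U_n$. In particular $f$ restricts to a biholomorphism $\wh{K}_n\to\wh{K}_{n+1}$ which identifies bounded complementary components.

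Next I would trace a single bounded complementary component. Fix a bounded component $W^{(0)}$ of $\omC\sm K_0$ and set $W^{(n)}:=f^n(W^{(0)})$; by the sheet structure, $W^{(n)}$ is a bounded component of $\omC\sm K_n$ and $f^n$ restricts to a biholomorphism $W^{(0)}\to W^{(n)}$. The $W^{(n)}$ are pairwise distinct, because their boundaries lie in the pairwise disjoint Julia components $K_n$. A case analysis on whether the hulls $\wh{K}_n$ are eventually pairwise disjoint or contain a nested subsequence shows that $\{W^{(n)}\}$ must shrink geometrically, so the biholomorphisms $f^n|_{W^{(0)}}$ form a normal family on $W^{(0)}$ whose subsequential limits are constant; consequently $W^{(0)}\subset F_f$. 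By Sullivan's theorem, each $W^{(n)}\subset F_f$ then lies in a periodic Fatou component, and the shrinkage of $W^{(n)}$ to an interior point of such a Fatou component clashes with $\pa W^{(n)}\subset K_n\subset J_f$ being bounded away from that point.

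The main obstacle is making the geometric degeneration of $\{W^{(n)}\}$ precise in both the disjoint and the nested hull configurations, and handling each type of periodic Fatou component uniformly (attracting, super-attracting, parabolic, Siegel, Herman). I expect the modulus and puzzle-piece estimates of \cite{cui2011topology} can be adapted to the sheet structure above: attracting-type cases fall to the shrinkage-vs-boundary contradiction, while rotation domains are excluded by the area-preservation obstruction inherent in their dynamics.
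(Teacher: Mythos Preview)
Your reduction to the case $K_n\in\mc{D}$ for all $n\ge 0$ and your observation that $f:\wh{K}_n\to\wh{K}_{n+1}$ is a homeomorphism are correct and match the paper. The remainder of your plan, however, is both more complicated than necessary and contains a genuine gap.

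The gap is in the nested case. You write that ``a case analysis on whether the hulls $\wh{K}_n$ are eventually pairwise disjoint or contain a nested subsequence shows that $\{W^{(n)}\}$ must shrink geometrically'', but you give no mechanism for this in the nested configuration, and your final paragraph concedes the point. The paper simply \emph{excludes} nesting: if $\wh{K}_n\subset\wh{K}_m$, let $\Omega$ be the bounded component of $\omC\sm K_m$ containing $K_n$. Since $\Omega$ meets $J_f$, some iterate $f^k(\Omega)$ contains all of $J_f$; but because each $f:\wh{K}_j\to\wh{K}_{j+1}$ is a homeomorphism, $f^k(\Omega)$ is still a bounded component of $\omC\sm K_{m+k}$, which cannot contain the Julia points in the unbounded component. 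So the $\wh{K}_n$ are pairwise disjoint.

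Once that is known, your endgame is overbuilt. Montel on the disjoint hulls already gives normality of $\{f^n\}$ on each bounded component $W$ of $\omC\sm K_{0}$, so $W\subset F_f$; since $\partial W\subset K_0\subset J_f$, the domain $W$ is itself a Fatou component, and since $f^n(W)$ has boundary in the wandering $K_n$, the orbit $\{f^n(W)\}$ is infinite. That is a wandering Fatou domain, contradicting Sullivan directly. No shrinkage estimate and no classification of periodic Fatou components is needed. (Incidentally, the lemma is stated for arbitrary degree, so the three-sheet decomposition $f^{-1}(\wh{K}_{n+1})=\wh{K}_n\sqcup\wh{L}_n\sqcup\wh{M}_n$ is out of place; it is also unnecessary once you have the homeomorphism $\wh{K}_n\to\wh{K}_{n+1}$.)
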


\begin{proof}
Assume by contradiction that $\{f^n(K)\}_{n\ge 0}$ visits $\mc{A}$ only finitely many times. Then there is an integer $n_0\ge 0$ such that $f^n(K)\in\mc{D}$ for all $n\ge n_0$. Denote $K_n=f^n(K)$. Then $f:\,\wh{K}_n\to\wh{K}_{n+1}$ is a homeomorphism for $n\geq n_0$ since $\wh{K}_{n+1}$ is a full continuum disjoint from $V_f$. We claim that for any $m,n\geq n_0$ with $m\neq n$, $\wh{K}_n\cap\wh{K}_m=\emptyset$. Otherwise, one is contained in another since $K_n\cap K_m=\emptyset$. We may assume that $\wh{K}_n\subset\wh{K}_m$. Then there is a component $\Omega$ of $\omC\sm K_m$ such that $K_n\subset\Omega$. Hence there is an interger $k\ge 1$ such that $J_f\subset f^{k}(\Omega)$ (refer to \cite[Corollary 14.2]{milnor2006dynamics}). Since for any $n\ge 1$, $f:\,\wh{K}_n\to\wh{K}_{n+1}$ is a homeomorphism, we know that $f^{k}(\Omega)$ is a bounded component of $\omC\sm K_{m+k}$. This contradicts the fact that $K_{m+k}$ is a Julia component.

By the claim, $\{f^n\}$ is a normal family on each bounded component of $\omC\sm K_{n_0}$, which is a wandering Fatou domain of $f$. This contradicts the Sullivan Non-wandering Theorem. Now the lemma is proved.
\end{proof}

By Lemma \ref{lem:A}, $\#\mc{A}=\infty$ if $\mc{C}\neq\emptyset$. Define an equivalence relation on $\mc{A}$ by $K\sim K'$ if $\wh{K}\cap X=\wh{K'}\cap X$; and an order relation by $K\prec K'$ if $K\sim K'$ and $\wh{K'}\subset\wh{K}$. If $K\sim K'$ but $K\neq K'$, then either $\wh{K}\subset\wh{K'}$ or $\wh{K'}\subset\wh{K}$. Thus either $K'\prec K$ or $K\prec K'$. 

Define the first return map $F$ on $\mc{A}$ by $F(K)=f^{n(K)}(K)$, where $n(K)\ge 1$ is the unique integer such that $f^{n(K)}(K)\in\mc{A}$ but $f^n(K)\in\mc{D}$ for $0<n<n(K)$. Then $f^{n(K)-1}$ is a homeomorphism on $\wh{K}$. By definition, $n(K')\le n(K)$ if $K'\prec K$.

For any $K_0\in\mc{A}$, since $\#X<\infty$, according to the equivalence relation, the orbit $\{F^n(K_0)\}_{n\ge 0}$ is decomposed into finitely many equivalence classes. Denote by $\mc{A}_i(K_0)$ ($1\le i\le q$) the equivalence classes which $\{F^n(K_0)\}_{n\ge 0}$ visits infinitely many times. Then there is an integer $n_0\geq 0$ such that for all $n\geq n_0$, $F^n(K_0)\in\bigcup_{1\le i\le q}\mc{A}_i(K_0).$ To simplify the notations, we may assume that $n_0=0$. Denote by $V(K)$ the unique bounded component of $\omC\sm K$ intersecting $V_f$ for $K\in\mc{A}$. Set
$$
E_i(K_0)=\bigcap_{K\in\mc{A}_i(K_0)}V(K)\,\text{ and }\,N_i(K_0)=\sup_{K\in\mc{A}_i(K_0)}n(K).
$$

\begin{lem}\label{lem:full}
If $N_i(K_0)=\infty$, then $E_i(K_0)$ is either a full wandering Julia component or a single point.
\end{lem}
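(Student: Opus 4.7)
The plan is to identify $E := E_i(K_0)$ with the descending intersection of hulls along a cofinal $\prec$-chain in $\mc{A}_i(K_0)$, to show it is a compact full continuum (possibly a single point), and, when it is nondegenerate, to verify that it is a wandering Julia component.

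First I would use $N_i(K_0) = \infty$ together with the monotonicity $n(K') \le n(K)$ for $K' \prec K$ to extract a strictly $\prec$-increasing sequence $K_1 \prec K_2 \prec \cdots$ in $\mc{A}_i(K_0)$ with $n(K_j) \to \infty$; the hulls $\wh{K_j}$ are then decreasing full continua and cofinal in $\mc{A}_i(K_0)$, so $\bigcap_j \wh{K_j} = E$. A nested intersection of continua is a continuum, and $\omC \sm E = \bigcup_j (\omC \sm \wh{K_j})$ is an ascending union of connected sets, hence connected, which makes $E$ full. The equivalence relation ensures that $S := V(K) \cap X$ is constant over $\mc{A}_i(K_0)$ and sits inside every $V(K)$, so $S \subset E$; since $K \in \mc{A}$ forces $V(K) \cap V_f \ne \emptyset$, the limit set $E$ contains at least one critical value.

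Assume $E$ has more than one point; I would show $E \subset J_f$ as follows. If some Fatou domain $U$ met $E$, connectedness of $U$ would force $U \subset \wh{K_j}$ for every $j$, and the homeomorphism $f^{n(K_j)-1}|_{\wh{K_j}}$ would give $f^i(U) \subset \wh{f^i(K_j)}$ for $0 \le i \le n(K_j) - 1$. The key sublemma, in the spirit of the disjointness argument inside the proof of Lemma \ref{lem:A}, is that these $n(K_j)$ hulls are pairwise disjoint: this uses that $\wh{K_j}$ contains critical values while each intermediate $\wh{f^i(K_j)}$ with $1 \le i \le n(K_j) - 1$ lies in $\mc{D}$ and so avoids $V_f$, together with transport of any putative nesting through the ambient homeomorphism on $\wh{K_j}$. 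Granting this, the orbit $\{f^n(U)\}_n$ would visit at least $n(K_j)$ distinct Fatou domains for every $j$, and $n(K_j) \to \infty$ would contradict Sullivan's Non-wandering Theorem.

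For the Julia-component property, $E$ being connected and inside $J_f$ lies in some Julia component $K^*$; if $K^* \supsetneq E$ strictly, $K^*$ would contain a point outside some $\wh{K_j}$ while $E \subset K^* \cap \Int(\wh{K_j})$ (because $E \subset \wh{K_{j+1}} \subset V(K_j) \subset \Int(\wh{K_j})$), forcing the connected $K^*$ to meet $K_j$, impossible for distinct Julia components; hence $K^* = E$. For wandering, if $E$ were periodic of period $p$, picking $j$ with $n(K_j) > p$ would make $f^p$ a homeomorphism on $\wh{K_j}$, giving $E = f^p(E) \subset \wh{f^p(K_j)}$; but $f^p(K_j) \in \mc{D}$ forces $\wh{f^p(K_j)} \cap V_f = \emptyset$, contradicting the critical value in $S \subset E$. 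The main obstacle will be the pairwise-disjointness sublemma for the hulls in a single return block, on which the Fatou/Sullivan contradiction hinges; the $\mc{A}$-vs-$\mc{D}$ asymmetry eliminates many nesting cases at once, but reducing the remaining cases via the ambient homeomorphism requires some care.
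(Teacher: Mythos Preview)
Your setup—extracting a cofinal $\prec$-chain with $n(K_j)\to\infty$, identifying $E$ as the nested intersection, checking it is a full continuum meeting $V_f$, and verifying $E$ is an entire Julia component once $E\subset J_f$—is fine and matches the paper. The divergence, and the real problem, is in how you try to prove $E\subset J_f$ and that $E$ is wandering.

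The paper's key observation is a one-liner you never isolate: since $f^{n(K)-1}$ is injective on $V(K)\supset E$ for every $K\in\mc{A}_i(K_0)$ and $N_i(K_0)=\infty$, the map $f^n$ is injective on $E$ for \emph{every} $n\ge 0$. From this, if $E$ meets $F_f$ it contains an eventually periodic Fatou domain $U$; writing $D=f^k(U)$ with $f^p(D)=D$, injectivity of $f^{k+p}|_E$ forces $f^p|_D$ to be injective, which kills attracting and parabolic basins immediately and, for rotation domains, forces $\partial D\subset E$ to lie in a nontrivial periodic Julia component on which $f^p$ has degree $\ge 2$ by McMullen's Theorem—again a contradiction. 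The same injectivity dispatches the eventually periodic case: if $f^k(E)$ were a nontrivial periodic Julia component of period $p$, then $\deg f^p|_{f^k(E)}\ge 2$ by McMullen, contradicting injectivity of $f^{k+p}|_E$.

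Your route instead hinges on the pairwise disjointness of the hulls $\wh{f^i(K_j)}$ for $0\le i<n(K_j)$. This is not the situation of Lemma~\ref{lem:A}: there every hull in the orbit lies in $\mc{D}$, so $f$ is a homeomorphism on the hull at \emph{every} step, and one can iterate until a bounded complementary component blows up to contain $J_f$. In your block, only finitely many steps are available before hitting $\mc{A}$ again, so the blow-up argument cannot be run, and the asymmetry ``$\wh{K_j}$ meets $V_f$ while the intermediate hulls do not'' only rules out half of the possible nestings. Even if you could prove the sublemma, it would not handle rotation domains: a Siegel disk $D\subset E$ with $f^p(D)=D$ gives $f^a(U)=f^b(U)$ without any contradiction to Sullivan, so your Sullivan-based argument stalls exactly where the paper invokes McMullen. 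Finally, your wandering step only excludes $E$ \emph{periodic} (via the neat observation that $f^p(E)\subset\wh{f^p(K_j)}$ misses $V_f$); it says nothing when $E$ is strictly pre-periodic, since $f^k(E)$ need not contain a critical value. All three gaps evaporate once you record the global injectivity of $f^n|_E$ and argue as the paper does.
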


\begin{proof}
Since $N_i(K_0)=\infty$, for any $K\in\mc{A}_i(K_0)$, there exists $K'\in\mc{A}_i(K_0)$ such that $n(K')>n(K)$. Thus $K\prec K'$ and hence $V(K')$ is compactly contained in $V(K)$. Therefore $E_i(K_0)$ is a full continuum containing critical values of $f$.

For any $K\in\mc{A}_i(K_0)$, since $f^{n(K)-1}$ is injective on $V(K)$ and $N_i(K_0)=\infty$, we known that $f^n$ is injective on $E_i(K_0)$ for all $n\ge 0$. We claim that $E_i(K_0)\subset J_f$. Otherwise $E_i(K_0)$ contains a Fatou domain and hence $f^k(E_i(K_0))$ contains a periodic Fatou domain $D$ of period $p\ge 1$ for some integer $k\ge 0$. If $D$ is not a rotation domain, then $\deg f^p|_D>1$. This contradicts the fact that $f^n$ is injective on $E_i(K_0)$ for all $n\ge 0$. If $D$ is a rotation domain, then $\partial D$ is contained in a non-trivial periodic Julia component $J$ of $f$ with $f^p(J)=J$. In particular, $J\subset E_i(K_0)$ and $\deg f^p|_J>1$ (refer to McMullen's Theorem in \S3). This is also a contradiction. Now the claim is proved.

By the claim $E_i(K_0)$ is contained in a Julia component $E_i'$. On the other hand, as a Julia component, $E_i'\subset V(K)$ for all $K\in\mc{A}_i(K_0)$. Thus $E_i(K_0)=E_i'$ is a full Julia component. If $E_i(K_0)$ is eventually periodic, then it must be a single point since $f^n$ is injective on $E_i(K_0)$ for all $n\ge 0$.
\end{proof}

For any $K\in\mc{A}$, denote $K'=f^{n(K)-1}(K)$. Then $f(K')=F(K)$. Since $\wh{F(K)}\sm V(F(K))$ is disjoint from $V_f$, the component of $f^{-1}(\wh{F(K)}\sm V(F(K)))$ containing $K'$ has exactly two complementary components, and one is unbounded since $f(\infty)=\infty$. Let $W$ be the other one. Then $W$ is a bounded component of $\omC\sm K'$ and $f:\, \wh{K'}\sm W\to\wh{F(K)}\sm V(F(K))$ is proper. Since $f^{n(K)-1}: \wh{K}\to\wh{K'}$ is a homeomorphism, there is a unique bounded component of $\omC\sm K$, denoted by $U(K)$, such that $f^{n(K)-1}: \wh{K}\sm U(K)\to\wh{K'}\sm W$ is a homeomorphism. Thus $f^{n(K)}:\wh{K}\sm U(K)\to\wh{F(K)}\sm V(F(K))$ is proper.


\begin{lem}\label{lem:UnV}
For any $K\in\mc{A}$, there exists an integer $k\ge 1$ such that $U(F^k(K))\neq V(F^k(K))$.
\end{lem}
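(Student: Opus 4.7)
The plan is to argue by contradiction. Assume $U(F^k(K))=V(F^k(K))$ for every $k\ge 1$; we will derive a contradiction with Sullivan's Non-wandering Theorem. After passing to an $F$-iterate of $K$, we may assume that the equality $U_k=V_k$ (writing $K_k:=F^k(K)$, $V_k:=V(K_k)$, $n_k:=n(K_k)$, $K'_k:=f^{n_k-1}(K_k)$, and $W_k$ as in the text) holds for every $k\ge 0$, and that each $K_k$ lies in one of the finitely many classes $\mathcal{A}_1(K),\dots,\mathcal{A}_q(K)$ produced by Lemma~\ref{lem:A}. The first task is to verify that under $U_k=V_k$, the restriction $f^{n_k}|_{V_k}\colon V_k\to V_{k+1}$ is a proper holomorphic surjection: the homeomorphism $f^{n_k-1}\colon\wh K_k\to\wh{K'_k}$ sends $V_k$ onto $W_k$ (by the very definition of $U_k$, now equal to $V_k$), and since $f\colon\wh{K'_k}\setminus W_k\to\wh{K_{k+1}}\setminus V_{k+1}$ is a proper map of annular regions, the inner complementary components correspond and $f$ sends $W_k$ properly onto $V_{k+1}$.

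Next I would choose a class $\mathcal{A}_i(K)$ with $N_i(K)=\infty$ (the existence of such a class is discussed below) and, using the total order $\prec$, pick indices $k_1<k_2<\cdots$ with $K_{k_j}\in\mathcal{A}_i(K)$ and $n(K_{k_j})\to\infty$, which forces $V_{k_{j+1}}\Subset V_{k_j}$ strictly. Composing the proper maps just produced yields proper $f^{M_j}\colon V_{k_j}\to V_{k_{j+1}}$ with $M_j=n_{k_j}+\cdots+n_{k_{j+1}-1}$. By Lemma~\ref{lem:full}, the set $E:=\bigcap_j V_{k_j}=E_i(K)$ is a full wandering Julia component or a single point; in either case $E\subset J_f$, and since $J_f$ is disconnected one has $J_f\ne\omC$, so $J_f$ has empty interior and hence so does $E$. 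Because $V_{k_1}$ is a nonempty open set and $J_f$ has empty interior, $V_{k_1}$ meets $F_f$, and any Fatou domain $D$ meeting $V_{k_1}$ satisfies $D\subset V_{k_1}$ (as $D$ is connected and disjoint from $K_{k_1}\subset J_f$). Setting $T_j:=M_1+\cdots+M_{j-1}$, properness forces $D_j:=f^{T_j}(D)\subset V_{k_j}$ to be a Fatou domain. By Sullivan's Non-wandering Theorem $\{f^n(D)\}$ is eventually periodic, so $\{D_j\}$ takes only finitely many values for large $j$, and some Fatou domain $D^*$ occurs as $D_j=D^*$ for infinitely many $j$. The corresponding $V_{k_j}$ form a cofinal subsequence of the strictly decreasing sequence $(V_{k_j})$, so $D^*\subset\bigcap_j V_{k_j}=E$, contradicting $D^*$ being a nonempty open set while $E$ has empty interior.

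The main obstacle I anticipate is establishing the existence of a class $\mathcal{A}_i(K)$ with $N_i(K)=\infty$. If on the contrary every $N_i(K)<\infty$, then the return times $n_k$ are uniformly bounded, each $\mathcal{A}_i(K)$ has a $\prec$-minimum element, and the argument above does not directly produce the strictly nested chain $V_{k_{j+1}}\Subset V_{k_j}$ needed to invoke Lemma~\ref{lem:full}. Ruling out this uniformly-bounded alternative---likely via a degree analysis of the proper maps $f^{n_k}\colon V_k\to V_{k+1}$, a modulus estimate on the annuli $\wh K_k\setminus V_k$, or a direct structural argument exploiting the totally ordered $\prec$ on each $\mathcal{A}_i(K)$ to force a minimal element to be periodic under $F$---is where I expect the real technical work of the proof to reside.
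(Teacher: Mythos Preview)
Your approach has a genuine gap, and it is precisely the one you identify: you need some class $\mc{A}_i(K)$ with $N_i(K)=\infty$, and nothing in your setup forces this. Under the standing hypothesis $U_k=V_k$ for all $k$, none of the $K_k$ lie in $\mc{B}$, so the later machinery (Lemma~\ref{lem:order}, Lemma~\ref{lem:full1}) that eventually produces unbounded return times is unavailable --- that machinery is built \emph{on top of} the lemma you are trying to prove. Your suggested rescues (degree counts, modulus estimates, forcing a periodic $\prec$-minimum) do not obviously close the gap; in the uniformly-bounded-$N_i$ scenario the visits of $\{K_k\}$ to a fixed $\mc{A}_i$ need not be $\prec$-monotone, so even extracting a strictly nested $V_{k_j}$ is problematic. (Incidentally, $\wh{K'_k}\sm W_k$ and $\wh{K_{k+1}}\sm V_{k+1}$ are not ``annular regions'' --- each $K'_k$ has at least three complementary components --- though your conclusion that $f$ sends $W_k$ properly onto $V_{k+1}$ can still be justified from the boundary correspondence of the proper map.)

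The paper's proof avoids all of this by looking at the \emph{other} bounded complementary components rather than at $V(K)$. Since $K\in\mc{C}$, there is a bounded component $\Omega$ of $\omC\sm K$ with $\Omega\neq U(K)$. The proper map $f^{n(K)}:\wh{K}\sm U(K)\to\wh{F(K)}\sm V(F(K))$ carries $\Omega$ onto a bounded component of $\omC\sm F(K)$ different from $V(F(K))$; under the hypothesis $U(F^k(K))=V(F^k(K))$ for all $k\ge 1$, this image is also $\neq U(F(K))$, so the argument iterates. Thus $f^n(\Omega)$ is a bounded component of $\omC\sm f^n(K)$ for every $n\ge 1$, hence never covers $J_f$; so $\Omega\subset F_f$ is a Fatou domain, and since $K$ is wandering it is a wandering Fatou domain --- contradicting Sullivan. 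The missing idea in your attempt is exactly this switch of viewpoint: track the ``extra'' holes of $K$, not the distinguished hole $V(K)$.
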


\begin{proof}
Assume by contradiction that for any $k\ge 1$, $U(F^k(K))=V(F^k(K))$. For any bounded component $\Omega$ of $\omC\sm K$ with $\Omega\neq U(K)$, $f^n(\Omega)$ is a bounded component of $\omC\sm f^n(K)$ for all $n\ge 1$. Thus $\Omega$ is a wandering Fatou domain. This is a contradiction.
\end{proof}

Denote
$$
\mc{B}=\{K\in\mc{A}:U(K)\neq V(K)\}.
$$
Then for any $K\in\mc{A}$, $\{F^n(K)\}_{n\ge 0}$ visits $\mc{B}$ infinitely many times by Lemma \ref{lem:UnV}. Thus $\#\mc{B}=\infty$ if $\mc{C}\neq\emptyset$. Define the first return map $G$ on $\mc{B}$ by $G(K)=f^{m(K)}(K)$, where $m(K)\geq 1$ is the unique integer such that $f^{m(K)}(K)\in\mc{B}$ but $f^n(K)\notin\mc{B}$ for $0<n<m(K)$. Then
$$
f^{m(K)}:\wh{K}\sm U(K)\to\wh{G(K)}\sm V(G(K))
$$
is proper. Note that $G=F^k$ on $K$ for some integer $k\ge 1$.

\begin{lem}\label{lem:order}
For any $K\in\mc{B}$,
\begin{itemize}
\item[(1)] if $K'\in\mc{B}$ and $K\prec K'$, then $m(K)<n(K')\le m(K')$;
\item[(2)] $\{K'\in\mc{B}:K'\prec K\}$ is a finite collection;
\item[(3)] if $G(K)\sim K$, then $G(K)\prec K$.
\end{itemize}
\end{lem}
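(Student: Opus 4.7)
The plan is to tackle the three parts in order, using Part (1) as the engine for Part (2) and the key geometric observation for Part (3). For Part (1), I would first observe that $n(K') \le m(K')$ is immediate from $\mc{B} \subset \mc{A}$. For the main inequality $m(K) < n(K')$: since $K \sim K'$ and $V_f \cap \widehat{K'} \ne \emptyset$, the uniqueness of $V(K)$ as the bounded component of $\omC \sm K$ meeting $V_f$ forces $\widehat{K'} \subset V(K)$, and because $K \in \mc{B}$ we have $V(K) \ne U(K)$, so $\widehat{K'} \subset \widehat{K} \sm U(K)$. Writing $G(K) = F^k(K)$ for some $k \ge 1$ and $m_i := \sum_{j < i} n(F^j(K))$, I then track $f^{m_i}(K')$ inductively via the proper maps $f^{n(F^i(K))}\colon \widehat{F^i(K)} \sm U(F^i(K)) \to \widehat{F^{i+1}(K)} \sm V(F^{i+1}(K))$: at step $i = 0$, $K' \subset V(K)$ with $V(K) \ne U(K)$; at each subsequent step $0 < i < k$, $F^i(K) \in \mc{A} \sm \mc{B}$ gives $U(F^i(K)) = V(F^i(K))$, and $f^{m_i}(K')$ lies in a bounded component of $\omC \sm F^i(K)$ distinct from $V(F^i(K))$. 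Filling in intermediate times via the homeomorphism of $f^{j}$ on $\widehat{F^i(K)}$ for $0 \le j < n(F^i(K))$, I conclude $\widehat{f^t(K')} \subset \widehat{f^t(K)}$ is disjoint from $V_f$ for every $0 < t \le m(K)$. Hence $f^t(K') \in \mc{D}$ for these $t$, yielding $n(K') > m(K)$.

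Part (2) then follows directly. The set $\{K' \in \mc{B} : K' \prec K\}$ lies in the $\sim$-equivalence class of $K$, so any two members are $\prec$-comparable, making it a $\prec$-chain. Part (1) shows $m$ is strictly increasing as one walks up this chain and bounded above by $m(K)$; since $m$ takes positive integer values, the chain has at most $m(K) - 1$ elements, hence is finite.

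For Part (3), the plan is Montel. Assuming $G(K) \sim K$, wandering of $K$ excludes $G(K) = K$, reducing the claim to ruling out $K \prec G(K)$. Supposing $K \prec G(K)$ for contradiction, the same argument from Part (1) gives $\widehat{G(K)} \subset V(K)$ compactly, so the image of the proper map $\phi := f^{m(K)}\colon \widehat{K} \sm U(K) \to \widehat{G(K)} \sm V(G(K))$ lies compactly inside $V(K) \subset \mathrm{int}(\widehat{K} \sm U(K)) =: \Omega$. Therefore $\phi$ restricts to a holomorphic map $\Omega \to \Omega$, and every iterate $\phi^n$ for $n \ge 1$ takes values in the compact set $\widehat{G(K)} \sm V(G(K))$, which omits $\infty$ and the open set $V(G(K))$. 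Montel's theorem then implies $\{\phi^n\}_{n \ge 1}$ is a normal family on $\Omega$, so $\Omega \subset F_\phi = F_f$. But $G(K) \subset V(K) \subset \Omega$ and $G(K) \subset J_f$, contradicting $\Omega \subset F_f$. Hence $G(K) \prec K$. The main obstacle is this Part (3) argument: the crucial insight is that once $K \prec G(K)$ is assumed, $\phi$ becomes a self-map of an open region with image compactly inside, so that Montel detects a Fatou region where the wandering Julia component $G(K)$ actually lives.
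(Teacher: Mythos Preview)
Your proposal is correct and follows essentially the same approach as the paper. Part (1) is argued identically (tracking $V(K)$, equivalently $K'\subset V(K)$, through the iterates $F^i(K)$ using $U(F^i(K))=V(F^i(K))$ for $0<i<k$), and Part (2) is the same chain-plus-monotonicity argument.

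For Part (3) the paper takes a slightly shorter route: rather than introducing $\Omega=\mathrm{int}(\widehat K\setminus U(K))$ and the proper map $\phi$, it simply reuses the computation from Part (1), which already shows that $f^{m(K)}(V(K))$ is a bounded component of $\omC\setminus G(K)$ disjoint from $V_f$; since $\widehat{G(K)}\subset V(K)$, this forces $f^{m(K)}(V(K))\subset V(K)$ directly, and Montel on $V(K)$ alone yields the contradiction. Your version via $\Omega$ is correct but carries more structure than needed. One cosmetic point: writing ``$F_\phi=F_f$'' is not literally true since $\phi$ is only defined on $\Omega$; what you actually use (and what is correct) is that normality of $\{f^{n\,m(K)}|_\Omega\}$ gives $\Omega\subset F_{f^{m(K)}}=F_f$.
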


\begin{proof}
(1) The fact $n(K')\le m(K')$ follows directly from the definitions. Assume that $G=F^k$ on $K$ with $k\ge 1$. Then $U(K)\neq V(K)$ and $U(F^i(K))=V(F^i(K))$ for $1\le i<k$. Denote $m_0=n(K)$ and $m_i=n(K)+\cdots+n(F^i(K))$ for $1\le i<k$. Then $m(K)=m_{k-1}$. Since $V(K)\neq U(K)$, $f^{n(K)}(V(K))$ is a bounded component of $\omC\sm F(K)$ disjoint from $V(F(K))$. If $k=1$, then $m(K)=n(K)$. So $f^{m(K)}(V(K))$ is a bounded component of $\omC\sm F(K)$ disjoint from $V(F(K))$. If $k>1$, then $f^{n(K)}(V(K))\neq U(F(K))$. Therefore $f^{m_1}(V(K))$ is a bounded component of $\omC\sm F^2(K)$ disjoint from $V(F^2(K))$. Inductively, $f^{m(K)}(V(K))=f^{m_{k-1}}(V(K))$ is a bounded component of $\omC\sm F^{k}(K)$ disjoint from $V(F^{k}(K))$ (see Figure \ref{fig:G3}). In summary, $f^{j}(V(K))$ is a bounded component of $\omC\sm f^j(K)$ disjoint from $V_f$ for $1\le j\le m(K)$.

Because $K'\subset V(K)$, $f^j(K')\subset f^j(V(K))$ for $1\le j\le m(K)$. Thus $f^j(K')\in\mc{D}$ for $1\le j\le m(K)$ and hence $m(K)<n(K')$.

\begin{figure}[htbp]
\centering
\includegraphics[width=14.5cm]{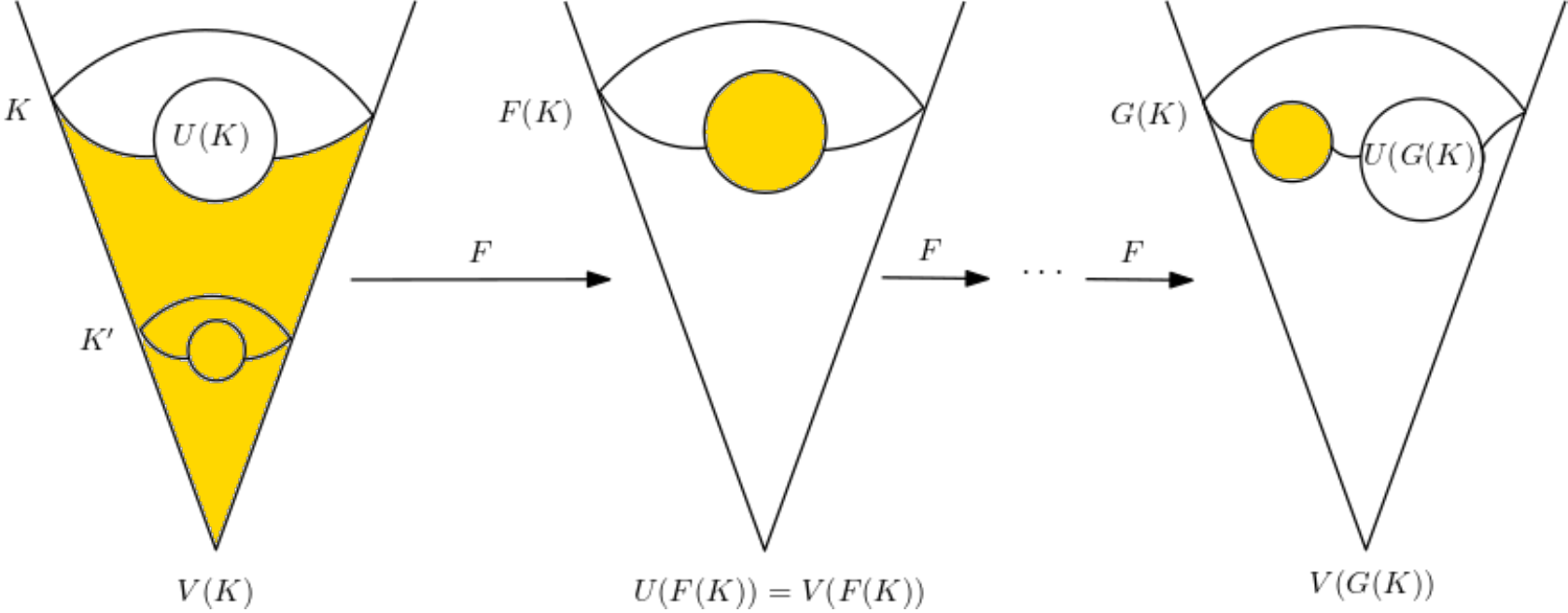}
\caption{The iterated images of $V(K)$ under $f$. }
\label{fig:G3}
\end{figure}

(2) This a direct consequence of (1) since $m(K')<\infty$.

(3) Assume by contradiction that $K\prec G(K)$. Then $V(K)\supset\wh{G(K)}$. From (1) we have $m(K)<m(G(K))$. Thus $f^{m(K)}(V(K))$ is a bounded component of $\omC\sm G(K)$ and hence $f^{m(K)}(V(K))\subset V(K)$ (see Figure \ref{fig:G1}). Therefore the sequence $\{f^{k\cdot m(K)}\}_{k\ge 1}$ is bounded on $V(K)$ and hence forms a normal family. So $V(K)\subset F_f$. This contradicts $G(K)\subset V(K)$.
\end{proof}

\begin{figure}[htbp]
\centering
\includegraphics[width=5cm]{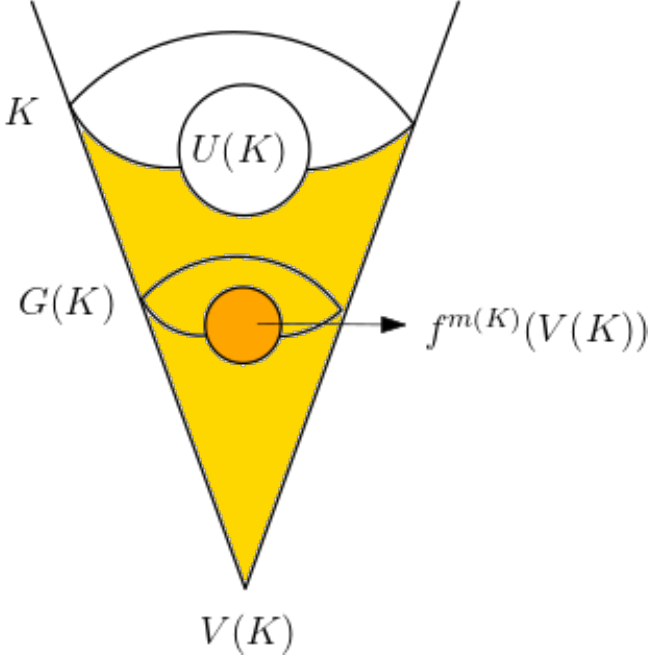}
\caption{$K\prec G(K)$.}
\label{fig:G1}
\end{figure}

\begin{lem}\label{lem:full1}
Assume $\mc{A}\neq\emptyset$. Then for any $K_0\in\mc{A}$, $\{F^n(K_0)\}_{n\ge 0}$ contains at least two equivalence classes $\mc{A}_i(K_0)$ such that $\#(\mc{A}_i(K_0)\cap\mc{B})=\infty$. Moreover,
$$
E_i(K_0)=\bigcap_{K\in\mc{A}_i(K_0)}V(K)
$$
is a full wandering Julia component if $\#(\mc{A}_i(K_0)\cap\mc{B})=\infty$.
\end{lem}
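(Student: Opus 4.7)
The plan is to dispose of both assertions by order-theoretic applications of Lemma \ref{lem:order}, with Lemma \ref{lem:full} invoked only to identify $E_i(K_0)$ once $N_i(K_0)=\infty$ has been established.

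For the first assertion, Lemma \ref{lem:UnV} guarantees that $\{F^n(K_0)\}$ visits $\mathcal{B}$ infinitely often, and since the orbit enters only the finitely many classes $\mathcal{A}_1(K_0),\ldots,\mathcal{A}_q(K_0)$, pigeonhole yields at least one class, say $\mathcal{A}_1(K_0)$, with $\#(\mathcal{A}_1(K_0)\cap\mathcal{B})=\infty$. Suppose for contradiction that $\mathcal{A}_1(K_0)$ is the only such class. Then all other classes contribute only finitely many $\mathcal{B}$-visits, so past some finite initial segment every $\mathcal{B}$-visit of the $F$-orbit lies in $\mathcal{A}_1(K_0)$. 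Fix $K\in\mathcal{A}_1(K_0)\cap\mathcal{B}$ past this segment. By the definition of $G$ as the first-return map to $\mathcal{B}$, each $G^j(K)$ is a successive $\mathcal{B}$-visit in the $F$-orbit, so $G^j(K)\in\mathcal{A}_1(K_0)$ for every $j\ge 0$. Consequently $G(G^j(K))\sim G^j(K)$, and Lemma \ref{lem:order}(3) gives $G^{j+1}(K)\prec G^j(K)$; Lemma \ref{lem:order}(1) applied with $K:=G^{j+1}(K)$ and $K':=G^j(K)$ then yields $m(G^{j+1}(K))<m(G^j(K))$. This produces the strictly decreasing sequence $m(K)>m(G(K))>m(G^2(K))>\cdots$ of positive integers, a contradiction.

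For the ``moreover'' clause, fix $\mathcal{A}_i(K_0)$ with $\#(\mathcal{A}_i(K_0)\cap\mathcal{B})=\infty$. Since $\prec$ restricts to a linear order on $\mathcal{A}_i(K_0)$, the infinite subset $\mathcal{A}_i(K_0)\cap\mathcal{B}$ contains an infinite strictly monotone subsequence $K_1,K_2,\ldots$ by the Erd\H{o}s--Szekeres dichotomy. A $\prec$-decreasing subsequence (i.e.\ $K_{j+1}\prec K_j$ for all $j$) is ruled out by exactly the $m$-argument above. Hence $K_j\prec K_{j+1}$ for all $j$, and Lemma \ref{lem:order}(1) gives $m(K_j)<n(K_{j+1})\le m(K_{j+1})$, so $m(K_j)\to\infty$ and $n(K_{j+1})\to\infty$. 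Therefore $N_i(K_0)=\infty$, and Lemma \ref{lem:full} identifies $E_i(K_0)$ as a full wandering Julia component.

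The main conceptual step is recognizing that Lemma \ref{lem:order}(3) and Lemma \ref{lem:order}(1) together create a rigid dichotomy inside any equivalence class: successive sim-related $\mathcal{B}$-elements are forced to be $\prec$-decreasing, yet this direction is immediately incompatible with the well-ordering of positive integers through the strict monotonicity of $m$. This single mechanism drives both the ``only one class'' contradiction and the divergence $n(K_{j+1})\to\infty$ needed to invoke Lemma \ref{lem:full}. The only bookkeeping input is identifying $G^j(K)$ with the successive $\mathcal{B}$-visits of the $F$-orbit starting at $K$, which is immediate from the definition of $G$ but is what connects the ``only one class'' hypothesis to Lemma \ref{lem:order}(3) at every step.
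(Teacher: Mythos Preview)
Your argument for the first assertion is correct and matches the paper's approach; the contradiction via Lemma~\ref{lem:order}(3) followed by the strict decrease of $m(\cdot)$ is equivalent to the paper's appeal to Lemma~\ref{lem:order}(2), since (2) is itself derived from (1).

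There is, however, a genuine gap in your treatment of the ``moreover'' clause. You write that Lemma~\ref{lem:full} ``identifies $E_i(K_0)$ as a full wandering Julia component,'' but that lemma only concludes that $E_i(K_0)$ is \emph{either} a full wandering Julia component \emph{or} a single point; in the second alternative (look at the proof of Lemma~\ref{lem:full}) the point is eventually periodic. The statement you are proving asserts that $E_i(K_0)$ \emph{is} a full wandering Julia component, so you must still exclude the eventually periodic single-point case. Your argument stops exactly one step too early.

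The paper supplies this missing step. Using your increasing $\prec$-chain $K_j=f^{k_j}(K_0)\in\mathcal{A}_i(K_0)\cap\mathcal{B}$ with $m_j=m(K_j)$, one notes that $f^{m_j}(K_j)=f^{m_j+k_j}(K_0)$ are pairwise distinct (hence disjoint) Julia components because $m_j+k_j$ is strictly increasing and $K_0$ is wandering. Since $K_j\in\mathcal{B}$, the set $f^{m_j}(V(K_j))$ is a bounded complementary component of $f^{m_j+k_j}(K_0)$ different from $V(f^{m_j+k_j}(K_0))$ (this was established in the proof of Lemma~\ref{lem:order}(1)); one then checks these images are pairwise disjoint. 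As $E_i(K_0)\subset V(K_j)$ for every $j$, the sets $f^{m_j}(E_i(K_0))$ are pairwise disjoint, so $E_i(K_0)$ cannot be eventually periodic. This is precisely the extra work needed to upgrade the conclusion of Lemma~\ref{lem:full} to what is claimed here.
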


\begin{proof}
Assume by contradiction that there is only one equivalence class $\mc{A}_i(K_0)$ such that $\#(\mc{A}_i(K_0)\cap\mc{B})=\infty$. Then there is an element $K\in\mc{A}_i(K_0)\cap\mc{B}$ such that $G^n(K)\in\mc{A}_i(K_0)\cap\mc{B}$ for all $n\ge 0$. By (3) of Lemma \ref{lem:order}, $G^{n+1}(K)\prec G^{n}(K)$ for all $n\ge 0$. Thus $G^n(K)\prec K$ for all $n\ge 1$. This contradicts (2) of Lemma \ref{lem:order}.

Suppose that $\#(\mc{A}_i(K_0)\cap\mc{B})=\infty$. Then there is an infinite increasing sequence $\{k_j\}_{j\ge 1}$ such that $f^{k_j}(K_0)\in\mc{A}_i(K_0)\cap\mc{B}$. By (2) of Lemma \ref{lem:order}, by passing to a subsequence, we may assume that $f^{k_j}(K_0)\prec f^{k_{j+1}}(K_0)\text{ for $j\ge 1$}$. Then
$$
E_i(K_0)=\bigcap_{j\ge 1}V(f^{k_j}(K_0)).
$$
Set $m_j=m(f^{k_j}(K_0))$. By (1) of Lemma \ref{lem:order}, $\{m_j\}$ is strictly increasing and
$$
N_i(K_0)=\sup_{K\in\mc{A}_i(K_0)}n(K)=\infty.
$$
Thus $E_i(K_0)$ is either a full wandering Julia component or a single point by Lemma \ref{lem:full}.

Note that $f^{m_j}(f^{k_j}(K_0))=f^{m_j+k_j}(K_0)$ are pairwise disjoint since $\{m_j+k_j\}$ is strictly increasing. Therefore $f^{m_j}(V(f^{k_j}(K_0)))\neq V(f^{m_j+k_j}(K_0))$ are pairwise disjoint bounded components of $\omC\sm f^{m_j+k_j}(K_0)$. From $E_i(K_0)\subset V(f^{k_j}(K_0))$, we obtain $f^{m_j}(E_i(K_0))\subset f^{m_j}(V(f^{k_j}(K_0)))$. Thus $\{f^{m_j}(E_i(K_0))\}_{j\geq 1}$ are also pairwise disjoint. So $E_i(K_0)$ is a full wandering Julia component.
\end{proof}

The next result is a direct consequence of Lemma \ref{lem:full1}.

\begin{cor}\label{cor:onefull}
Let $f$ be a rational map with disconnected Julia set. If there is at most one full wandering Julia component containing critical values of $f$, then for any wandering Julia component $K$ of $f$, $\omC\sm f^n(K)$ has at most two components as $n\ge 0$ is large enough.
\end{cor}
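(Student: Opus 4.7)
The plan is a proof by contradiction. Assume the conclusion fails, so there is a wandering Julia component $K$ with $\omC\sm f^n(K)$ having at least three components for infinitely many $n$; since the number of complementary components is non-increasing under $f$ (a fact recorded in Section~2), some iterate $f^N(K)$ lies in $\mc{C}$, so $\mc{C}\neq\emptyset$. I would then use Lemma \ref{lem:full1} to produce two distinct full wandering Julia components of $f$, each containing critical values, contradicting the hypothesis.

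First I would guarantee $\mc{A}\neq\emptyset$: since $\#\mc{S}\le 2\#X-2<\infty$ and $\{f^n(K)\}$ eventually lies in $\mc{D}\cup\mc{A}$, Lemma \ref{lem:A} forces $\mc{A}\neq\emptyset$ as soon as $\mc{C}\neq\emptyset$. Fix any $K_0\in\mc{A}$ and apply Lemma \ref{lem:full1}, which supplies at least two distinct equivalence classes $\mc{A}_1(K_0),\mc{A}_2(K_0)$ with $\#(\mc{A}_i(K_0)\cap\mc{B})=\infty$, and with each
$$
E_i(K_0)=\bigcap_{K\in\mc{A}_i(K_0)}V(K), \qquad i=1,2,
$$
being a full wandering Julia component.

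Next I would verify that each $E_i(K_0)$ contains at least one critical value. For any $K\in\mc{A}_i(K_0)$, since $K\in\mc{C}\sm\mc{S}$ we have $K\cap X=\emptyset$, and $V(K)$ is by definition the unique bounded component of $\omC\sm K$ meeting $V_f$. Thus $S_i:=\wh{K}\cap X=V(K)\cap V_f$ is a nonempty subset of $V_f$ that depends only on the equivalence class; intersecting over $K\in\mc{A}_i(K_0)$ preserves containment and yields $S_i\subset E_i(K_0)$.

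The main step, and the only nonroutine one, is showing $E_1(K_0)\neq E_2(K_0)$. Assume instead that $E_1(K_0)=E_2(K_0)=:E$. Picking any $K\in\mc{A}_1(K_0)$ gives $E\subset V(K)$, hence $S_2\subset E\cap V_f\subset V(K)\cap V_f=S_1$; swapping the roles of $1$ and $2$ yields $S_1\subset S_2$, so $S_1=S_2$. But the equivalence classes are distinguished precisely by this set, so $S_1=S_2$ forces $\mc{A}_1(K_0)=\mc{A}_2(K_0)$, a contradiction. Hence $E_1(K_0)$ and $E_2(K_0)$ are two distinct and therefore disjoint full wandering Julia components, each containing critical values, which contradicts the hypothesis and completes the argument.
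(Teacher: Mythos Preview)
Your proposal is correct and follows exactly the route the paper intends: the corollary is stated as ``a direct consequence of Lemma~\ref{lem:full1},'' and your argument simply spells out that deduction. In particular, your justification that $E_1(K_0)\neq E_2(K_0)$ via the invariant $S_i=\wh{K}\cap X=V(K)\cap V_f$ is precisely the reason the paper regards the corollary as immediate (indeed one has $E_i(K_0)\cap V_f=S_i$, which makes the distinctness even more transparent); the only cosmetic slip is that by the non-increasing property you already get $K\in\mc{C}$ itself, not merely some iterate $f^N(K)$.
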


\begin{lem}\label{lem:full2}
Suppose that each full wandering Julia component of $f$ contains at most one critical value. Then for any $K_0\in\mc{A}$, $E_i(K_0)$ is either a full wandering Julia component or a single point.
\end{lem}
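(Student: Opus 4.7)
The plan is to prove Lemma \ref{lem:full2} by reducing it to Lemma \ref{lem:full}, which already gives the desired dichotomy whenever $N_i(K_0)=\infty$. Thus it suffices to verify, under the standing hypothesis, that $N_i(K_0)=\infty$ for every equivalence class $\mc{A}_i(K_0)$.

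For classes $\mc{A}_i(K_0)$ with $\#(\mc{A}_i(K_0)\cap\mc{B})=\infty$, this is already established by the proof of Lemma \ref{lem:full1}, which furthermore identifies $E_i(K_0)$ as a full wandering Julia component. The standing hypothesis then forces $|E_i(K_0)\cap V_f|=1$, so the associated $S_i$'s are distinct singletons; since Lemma \ref{lem:full1} guarantees at least two such classes, at least two distinct critical values of $f$ are accounted for in this way.

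The remaining task is to rule out a class $\mc{A}_{i_0}(K_0)$ with $\#(\mc{A}_{i_0}(K_0)\cap\mc{B})<\infty$ but $N_{i_0}(K_0)<\infty$. After discarding finitely many terms, every $K\in\mc{A}_{i_0}(K_0)$ lies in $\mc{A}\sm\mc{B}$, so $U(K)=V(K)$ and $f^{n(K)}$ restricts to a proper holomorphic map $V(K)\to V(F(K))$ between simply connected regions. The first step is a $\mc{B}$-free analog of Lemma \ref{lem:order}(3): if $K\in\mc{A}\sm\mc{B}$ and $F(K)\sim K$, then $F(K)\prec K$. The argument mirrors Lemma \ref{lem:order}(3): if instead $K\prec F(K)$, then $V(F(K))\subset V(K)$, so $f^{n(K)}$ sends $V(K)$ into itself; the iterates $\{f^{j\cdot n(K)}\}$ then form a normal family on the bounded region $V(K)$, forcing $V(K)\subset F_f$ and contradicting $F(K)\subset V(K)\cap J_f$.

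The hardest part will be bootstrapping this single-step descent into an infinite strictly $\prec$-decreasing chain inside $\mc{A}_{i_0}(K_0)$, which together with a finiteness statement in the spirit of Lemma \ref{lem:order}(2) will yield the contradiction that forces $N_{i_0}(K_0)=\infty$. The obstruction is that consecutive visits of the $F$-orbit to $\mc{A}_{i_0}(K_0)$ are in general separated by excursions through $\mc{B}$, along which the inclusion $f^{n(K)}(V(K))\subset V(F(K))$ that drives the invariance argument can fail. The plan to overcome this is to use the full wandering Julia components $E_{j_1}(K_0),E_{j_2}(K_0)$ identified in Lemma \ref{lem:full1} as combinatorial barriers: under the hypothesis each holds exactly one critical value, rigidly constraining how the $F$-orbit can weave between $\mc{A}_{i_0}(K_0)$ and the classes $\mc{A}_{j_k}(K_0)$. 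Careful bookkeeping of these transits should recover enough $V$-invariance to deduce a strict $\prec$-descent on first returns to $\mc{A}_{i_0}(K_0)$, producing the required infinite descending chain. This is precisely where the single-critical-value hypothesis is essential, and where the main technical work of the proof will lie.
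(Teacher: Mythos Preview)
Your proposal is a plan rather than a proof: the decisive step---building an infinite $\prec$-descending chain in $\mc{A}_{i_0}(K_0)$ across excursions through $\mc{B}$---is only announced, and the ``careful bookkeeping'' that would make it work is never carried out. Moreover, one of the preliminary claims is unjustified: from $U(K)=V(K)$ you assert that $f^{n(K)}$ restricts to a proper map $V(K)\to V(F(K))$, but the paper only gives that $f^{n(K)}:\wh{K}\sm U(K)\to\wh{F(K)}\sm V(F(K))$ is proper. The bounded domain $W=f^{n(K)-1}(U(K))$ may contain other components of $f^{-1}(\wh{F(K)}\sm V(F(K)))$, so $f^{n(K)}(V(K))$ need not land in $V(F(K))$; your normal-family argument for the $\mc{B}$-free analog of Lemma~\ref{lem:order}(3) therefore has a gap even before the ``hardest part''.

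More importantly, the paper does \emph{not} try to show $N_i(K_0)=\infty$ for every class, and it is not clear this is even true. Instead it partitions $I=\{1,\dots,q\}$ into the ``good'' set $I_1$ (where $E_j$ is already a full wandering component or a point) and the ``bad'' set $I_0$, and argues directly that $I_0=\emptyset$. The key mechanism is a pull-back: one finds $(i,j)\in I_0\times I_1$ and a sequence $K_k\in\mc{A}_i(K_0)$ with $F(K_k)\in\mc{A}_j(K_0)$ and $\bigcap V(F(K_k))=E_j(K_0)$. Here the hypothesis enters: since $E_j(K_0)$ contains exactly one critical value, $\wh{F(K_k)}$ eventually contains a single critical value, which forces $f^{n(K_k)}(\wh{K_k})=\wh{F(K_k)}$. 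With $n(K_k)\le N_i(K_0)<\infty$ one passes to a constant $n(K_k)=N$ and deduces $f^{N}(E_i(K_0))\subset E_j(K_0)$; then $E_i(K_0)$ coincides with the component of $f^{-N}(E_j(K_0))$ containing it, hence is itself a full wandering component or a point---contradiction. The existence of such a pair $(i,j)$ is obtained by a separate combinatorial argument (a relation on $I_1$ with no cycles and a ``sink'' $j_0$). This line of reasoning exploits the \emph{finiteness} of $N_i$ on $I_0$ rather than trying to overturn it, and that is the idea your plan is missing.
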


\begin{proof}
The index set $I:=\{1,\cdots,q\}$ can be written as a disjoint union $I=I_0\cup I_1$ such that $j\in I_1$ if and only if $E_j(K_0)$ is either a full wandering Julia component or a single point. By Lemma \ref{lem:full1}, $I_1$ is non-empty. Assume by contradiction that $I_0$ is non-empty. Then $N_i(K_0)<\infty$ for $i\in I_0$ by Lemma \ref{lem:full}.

\vskip 0.24cm
\noindent{\bf Claim 1}. There exist $(i,j)\in I_0\times I_1$ and a sequence $\{K_k\}_{k\ge 1}$ in $\mc{A}_i(K_0)$ such that $F(K_k)\in\mc{A}_j(K_0)$ and
$$
\bigcap_{k\ge 1}V(F(K_k))=E_j(K_0).
$$
By Claim 1, $f^{n(K_k)}(\wh{K_k})=\wh{F(K_k)}$ since $E_j(K_0)$ contains exactly one critical value of $f$. Since $n(K_k)\le N_i(K_0)<\infty$, passing to a subsequence, we may assume that $n(K_k)=N$ is a constant for all $k\ge 1$, i.e., $f^{N}(\wh{K_k})=\wh{F(K_k)}$ for all $k\ge 1$. Thus
$$
\bigcap_{k\ge 1}f^{N}(\wh{K_k})=\bigcap_{k\ge 1}\wh{F(K_k)}=E_j(K_0).
$$
So we have $f^N(E_i(K_0))\subset E_j(K_0)$.

Let $E_i'$ be the component of $f^{-N}(E_j(K_0))$ containing $E_i(K_0)$. Then $E_i'$ is either a full wandering Julia component or a single point. Thus $E'_i\subset V(K)$ for all $K\in\mc{A}_i(K_0)$. So $E'_i\subset E_i(K_0)$ and hence $E_i(K_0)=E'_i$ is either a full wandering Julia component or a single point. This contradicts the fact $i\in I_0$.
\end{proof}

\begin{proof}[Proof of Claim 1] Assume by contradiction that the claim is not true. Then for each $j\in I_1$, there exists an $L_j\in\mc{A}_j(K_0)$ such that $F(K)\prec L_j$ for any $K\in\bigcup_{i\in I_0}\mc{A}_i(K_0)$ with $F(K)\in\mc{A}_j(K_0)$.

Define a relation in $I_1$ by $j\rhd j'$ if there is a $K\in\mc{A}_j(K_0)$ with $K\prec L_j$ and an integer $k\ge 1$ such that
\begin{itemize}
\item $F^{k}(K)\in\mc{A}_{j'}(K_0)$, $L_{j'}\prec F^{k}(K)$ or $L_{j'}=F^{k}(K)$, and
\item $F^s(K)\in\bigcup_{j\in I_1}\mc{A}_j(K_0)$ for $1\le s\le k$.
\end{itemize}
Let $m\ge 1$ be an integer such that $f^{m}=F^{k}$ on $K$. Then $f^{m}(\wh{K})=\wh{F^k(K)}$ since $\wh{F^s(K)}$ contains exactly one critical value of $f$ for $1\le s\le k$. Consequently,
$$
f^{m}(\wh{L_j})\subset f^{m}(\wh{K})=\wh{F^k(K)}\subset\wh{L_{j'}}.
$$

\vskip 0.24cm
\noindent{\bf Claim 2}.
There exists a $j_0\in I_1$ such that $j\rhd j_0$ does not hold for any $j\in I_1$.

Otherwise, there is a sequence $\{j_1,\cdots,j_{p}\}$ in $I_1$ such that
$$
j_1\rhd\cdots\rhd j_p\rhd j_{p+1}:=j_1.
$$
By definition, for $1\le t\le p$, there exist $K_t\in\mc{A}_{j_t}(K_0)$ with $K_t\prec L_{j_t}$ and integers $k_t\ge 1$ such that,
\begin{itemize}
\item $F^{k_t}(K_t)\in\mc{A}_{j_{t+1}}(K_0)$, $L_{j_{t+1}}\prec F^{k_t}(K_t)$ or $L_{j_{t+1}}=F^{k_t}(K_t)$, and
\item $f^{m_t}(\wh{L_{j_t}})\subset\wh{L_{j_{t+1}}}$, where $m_t\ge 1$ is an integer such that $f^{m_t}=F^{k_t}$ on $K$.
\end{itemize}
Thus $f^m(\wh{L_{j_1}})\subset\wh{L_{j_1}}$ for $m=m_1+\cdots+m_p$. This is a contradiction. Now Claim 2 is proved.

\vskip 0.24cm
Since $I_0$ is non-empty, there is an integer $n_0\ge 1$ such that $F^{n_0}(K_0)\in\bigcup_{i\in I_0}\mc{A}_i(K_0)$. For any $n>n_0$ with $F^n(K_0)\in\mc{A}_{j_0}(K_0)$, let $n_1\in [n_0, n)$ be the largest integer such that $F^{n_1}(K_0)\in\bigcup_{i\in I_0}\mc{A}_i(K_0)$. Then $F^{n_1+1}(K_0)\in\mc{A}_{j'}(K_0)$ for some $j'\in I_1$ and $F^{n_1+1}(K_0)\prec L_{j'}$ by the assumption. In the case $n_1+1=n$, we have $j'=j_0$ and $F^{n}(K_0)\prec L_{j_0}$. In the case $n_1+1<n$, if $L_{j_0}\prec F^n(K_0)$ or $L_{j_0}=F^n(K_0)$, then $j'\rhd j_0$. This contradicts the definition of $j_0$. Thus in both cases  $F^{n}(K_0)\prec L_{j_0}$ for all $n>n_0$ with $F^n(K_0)\in\mc{A}_{j_0}(K_0)$. So $E_{j_0}(K_0)$ is an open set. This is a contradiction. Now Claim 1 is proved.
\end{proof}

\begin{proof}[Proof of Theorem \ref{thm:2}]
Assume by contradiction that there is a wandering Julia component of $f$ such that its iterated images always have at least three complementary components. Then $\mc{C}\neq\emptyset$ and $\#\mc{A}=\infty$.

For any $K_0\in\mc{A}$, applying Lemma \ref{lem:full2}, each $E_i(K_0)$ contains exactly one critical value. Thus there is an integer $n_0\ge 1$ such that $V(F^n(K_0))$ contains exactly one critical value for $n\ge n_0$. Write $K=F^{n_0}(K_0)$ and
$$
m_k=n(K)+n(F(K))+\cdots+n(F^{k-1}(K)).
$$
Then $f^{m_{k}}:\, \wh{K}\to\wh{F^k(K)}$ is proper for all $k\ge 1$. This is a contradiction.
\end{proof}

\noindent{\bf Remark}. Comparing with the argument in \cite{cui2011topology}, besides of some little modifications, Lemmas \ref{lem:full} and \ref{lem:full2} are new developments.

\section{Critical orbits in periodic components}
In \S2, we obtain some results about the orbits of critical points in wandering Julia components. In this section, we study the critical orbits in Fatou domains and periodic Julia components.

\begin{thm}\label{thm:disconnected}
Let $f$ be a rational map with disconnected Julia set. Then either the Fatou set $F_f$ or periodic Julia components of $f$ contain at least two critical values.
\end{thm}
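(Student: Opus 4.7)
The plan is to prove Theorem~\ref{thm:disconnected} by contradiction: suppose at most one critical value of $f$ lies in $F_f\cup P$, where $P$ denotes the union of all periodic Julia components; I will show $J_f$ must be connected. The starting observation is topological. Because $J_f$ has at least two distinct components, there is a Jordan curve $\gamma\subset F_f$ that separates them. Since $\gamma$ is connected, it sits inside a single Fatou component $U$, and $\gamma$ cannot be nullhomotopic in $U$: if it were, it would bound a topological disk in $F_f$, which could not contain a Julia component. Hence $U$ has connectivity at least $2$. By Sullivan's Non-wandering Theorem, the forward orbit of $U$ lands in a periodic Fatou cycle $V_0\to V_1\to\cdots\to V_{p-1}\to V_0$.

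The core of the proof is a case analysis on this periodic cycle. If some $V_i$ is itself multiply connected, then $V_i$ is either a Herman ring or an infinitely connected component of an attracting, super-attracting, or parabolic basin. In the latter situation, Fatou--Shishikura supplies a critical value inside the basin, hence in $F_f$, and the infinite connectivity of $V_i$ forces additional branching of the first return map $f^p:V_i\to V_i$ via Riemann--Hurwitz, yielding a second critical value also in $F_f$. In the Herman ring situation, the two boundary components of each $V_i$ lie in periodic Julia components; combining Shishikura's principle that each Herman ring consumes at least two distinct critical orbits with McMullen's conjugacy theorem for nontrivial periodic Julia components, one can locate two critical values of $f$ in $F_f\cup P$. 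If instead every $V_i$ is simply connected, then the orbit $U\to f(U)\to\cdots\to V_0$ simplifies connectivity from $\ge 2$ down to $1$; applying Riemann--Hurwitz step by step forces critical points to appear in the wandering Fatou components along this orbit, and since any critical point $c\in F_f$ has $f(c)\in F_f$, this produces critical values in $F_f$. An Euler characteristic accounting, with $\chi(U)\le 0$ versus $\chi(V_0)=1$, shows at least two critical values are forced.

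The step I expect to be the main obstacle is the Herman ring subcase. Although it is classical (Ma\~n\'e) that critical orbits accumulate on the boundary of any rotation domain, the stronger statement that genuine critical \emph{values} of $f$ lie in the periodic Julia components comprising this boundary is delicate. My plan is to exploit McMullen's theorem: each nontrivial periodic Julia component $K$ of period $p$ has $(f^p,K)$ quasiconformally conjugate to $(g,J_g)$ for some rational map $g$ of degree $\ge 2$, and the $2\deg g-2$ critical points of $g$ pull back through the conjugacy to critical points of $f^p$ on the cycle of $K$. Tracing indices, these correspond to critical points of $f$ whose values lie in the cycle of $K\subset P$. Together with the critical-orbit count forced by the Herman ring itself, this should yield the required second critical value in $F_f\cup P$ and close the contradiction.
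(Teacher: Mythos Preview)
Your treatment of the non-Herman cases is essentially correct and in fact more explicit than the paper's one-line assertion that ``if $f$ has no Herman rings and $F_f$ contains only one critical value, then every Fatou domain is simply connected.'' One warning: invoking Riemann--Hurwitz directly on an infinitely connected $V_i$ is not meaningful. The clean way to handle both sub-cases at once is the single-critical-value bound you implicitly use in the simply connected case: if all critical points in a Fatou domain $A$ lie over one point $v\in f(A)$, the fiber over $v$ has exactly $\deg(f|_A)$ points counted with multiplicity, so the number of critical points $r_A$ satisfies $r_A\le \deg(f|_A)-1$, and hence $\chi(A)\ge 1$ whenever $\chi(f(A))=1$. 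Applying this to pullbacks of a linearizing disk (or attracting petal) exhausts any periodic basin by simply connected domains, ruling out infinite connectivity without ever writing Riemann--Hurwitz on an infinite-type surface.

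The genuine gap is in the Herman ring case, exactly at the step you flagged as the obstacle. Your plan to transport the $2\deg g-2$ critical points of $g$ through the McMullen conjugacy fails because that conjugacy only matches $K$ with $J_g$; critical points of $g$ lying in $F_g$ --- which is the generic situation --- have no counterpart in $K$, so you cannot conclude anything about critical points of $f^p$ this way. The paper supplies the missing ingredient: McMullen's theorem carries an additional clause stating that when $K$ meets the boundary of a Herman ring of $f$, the model map $g$ has a \emph{Siegel disk}; a classical fact (Milnor, Theorem~11.17) then forces $J_g$ to contain a critical value of $g$, and since $J_g$ is completely invariant the corresponding critical point also lies in $J_g$. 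Now the conjugacy does apply and places a critical value of $f$ in the cycle of $K$. The paper also gives a separate combinatorial argument showing that, for a suitably chosen ring $H$ in the Herman cycle (one that does not separate the rest of the cycle), the two boundary continua lie in \emph{distinct} periodic cycles of Julia components; this is what yields two critical values rather than one. Appealing to Shishikura-type counts does not substitute for this step: those results concern how many critical orbits are consumed by a Herman cycle, not where the critical \emph{values} sit relative to $P$.
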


At first, we recall McMullen's Theorem for periodic Julia components. A Julia component $K$ is {\bf non-trivial} if it is not a single point.

\begin{thmA}
Let $f$ be a rational map with disconnected Julia set. Suppose that $K$ is a non-trivial periodic Julia component of $f$ with period $p\ge 1$. Then there exists a rational map $g$ with $\deg g\ge 2$ and a quasiconformal map $\phi:\omC\ra\omC$ such that
\begin{itemize}
\item[(1)] $\phi(K)=J_g$,
\item[(2)] $\phi\circ f^p(z)=g\circ\phi(z)$ for $z\in K$, and
\item[(3)] if $K$ intersects boundaries of Siegel discs or Herman rings, then $g$ has Siegel discs.
\end{itemize}
\end{thmA}

We mention that statement (3) is not listed in the original version of McMullen's Theorem \cite[Theorem 3.4]{mcmullen1988automorphisms}, but the proof there actually includes a proof of statement (3).

\begin{lem}\label{lem:herman}
Let $f$ be a rational map with a cycle of Herman rings. Then there are at least two distinct cycles of Julia components containing critical values of $f$.
\end{lem}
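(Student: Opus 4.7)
The plan is to show that the two boundary components of a Herman ring in the given cycle lie in two distinct periodic Julia components belonging to two distinct $f$-cycles, and that each of those cycles must contain a critical value of $f$.

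First I would fix a Herman ring $D$ of period $p$ in the cycle, with boundary Jordan curves $\gamma^+$ and $\gamma^-$. Because $f^p|_D$ is conformally conjugate to an irrational rotation on an annulus that extends continuously to the boundary, $f^p$ fixes each $\gamma^{\pm}$ setwise, has local degree one along $\gamma^{\pm}$, and the two $f$-orbits $\{f^i(\gamma^+)\}$ and $\{f^i(\gamma^-)\}$ are disjoint. Each $\gamma^{\pm}$ is contained in a non-trivial periodic Julia component $K^{\pm}$ of period dividing $p$. Writing $\overline{U^+}$ and $\overline{U^-}$ for the two disjoint closed disks of $\omC\setminus D$, connectivity of $K^{\pm}$ together with its disjointness from the Fatou component $D$ forces $K^+\subset\overline{U^+}$ and $K^-\subset\overline{U^-}$, so in particular $K^+\cap K^-=\emptyset$.

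Next I would apply McMullen's Theorem to each $K^{\pm}$ to obtain rational maps $g^{\pm}$ of degree at least $2$ and quasiconformal conjugations $\phi^{\pm}$. Since $K^{\pm}$ meets the boundary of the Herman ring $D$, statement (3) of the theorem gives that $g^{\pm}$ has a Siegel disc. A case analysis on the Fatou components of $g^{\pm}$ (attracting, parabolic, super-attracting, Siegel, and Herman ring domains are each excluded) shows that a critical orbit accumulating on the boundary of a Siegel disc of $g^{\pm}$ must originate at a critical point in $J_{g^{\pm}}$; hence $g^{\pm}$ has a critical value in $J_{g^{\pm}}$. Because holomorphic local degree is a topological invariant, the conjugation $\phi^{\pm}$ pulls this back to a critical point of $f^{p^{\pm}}$ lying in $K^{\pm}$, and some $f^k$-iterate of this point is then a critical point of $f$ in an element of the $f$-cycle of $K^{\pm}$. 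Consequently the cycle of $K^{\pm}$ contains a critical value of $f$.

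The remaining step, which I expect to be the main obstacle, is to show that the cycles of $K^+$ and $K^-$ are distinct. Assuming for contradiction that $K^-=f^j(K^+)$ for some $0<j<p^+$, the connected set $K^-\subset\overline{U^-}$ would have to contain both $\gamma^-$ and the Jordan curve $f^j(\gamma^+)$; since the $f$-orbits of $\gamma^+$ and $\gamma^-$ are disjoint these are boundaries of two distinct Herman rings in the cycle. I plan to derive a contradiction by combining the topological constraint (the locations of these two Herman rings in $\omC$ relative to $\overline{U^-}$ are severely restricted once one requires the connected set $K^-$ to reach both boundaries) with the degree information from McMullen's Theorem applied to $K^-$, namely that $\deg g^-\geq 2$ forces $g^-|_{J_{g^-}}$ to be non-injective. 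Once this contradiction is established, $K^+$ and $K^-$ determine two distinct cycles of periodic Julia components, each containing a critical value of $f$, and the lemma follows.
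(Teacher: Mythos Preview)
Your overall strategy matches the paper's---use the two boundary components of a Herman ring, locate them in periodic Julia components $K^{\pm}$, apply McMullen's Theorem~(3) to get Siegel disks for the model maps, and conclude each cycle carries a critical value of $f$. That part is fine.

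The genuine gap is the distinctness of the two cycles, which you yourself flag as the main obstacle but do not actually prove: your plan to combine an unspecified ``topological constraint'' with the non-injectivity of $g^-$ is not an argument, and I do not see how $\deg g^-\ge 2$ bears on this question at all. For an \emph{arbitrary} ring $D$ in the cycle this step is genuinely delicate, since other rings $f^i(D)$ may sit inside $\overline{U^-}$ and their boundaries may share Julia components with $\gamma^-$. The paper bypasses the difficulty by a specific choice of ring: pick $H$ so that all of $\mc{H}\sm H$ lies in \emph{one} complementary component of $H$, and let $B_0$ be the boundary component on the side containing no other ring of the cycle. Then the Julia component $K_0\supset B_0$ is automatically disjoint from $\partial f^n(H)$ for every $1\le n<p$, and a short iteration argument followed by a two-case split (according to whether $K_1$ meets $\partial(\mc{H}\sm H)$ or not) shows directly that $K_0$ and $K_1$ lie in distinct cycles. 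This extremal choice of $H$ is the missing idea in your proposal. (A minor correction: boundary components of Herman rings need not be Jordan curves, so do not assume $\gamma^{\pm}$ are.)
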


\begin{proof}
Let $H$ be a Herman ring of $f$ with period $p\ge 1$. Then there is an integer $0\le i<p$ such that $f^i(H)$ does not separate $\mc{H}\sm f^i(H)$, where $\mc{H}=\bigcup_{0\le n<p}f^n(H)$.
Assume $i=0$ for simplicity, i.e., $\mc{H}\sm H$ is contained in one component of $\omC\sm H$.

Let $B_0$ and $B_1$ be the two components of $\partial H$ such that $B_0$ does not separate $\mc{H}$. Let $K_0$ and $K_1$ be the Julia components containing $B_0$ and $B_1$, respectively. Then both of them are non-trivial periodic Julia components.

Since $K_0$ is disjoint from $\pa f^n(H)$ for all $1\le n<p$, $f^n(K_0)$ is disjoint from $\pa f^m(H)$ if $m-n\not\equiv 0\bmod p$. Otherwise $K_0=f^{p-n}(f^n(K_0))$ intersects
$f^{p-n}(\pa f^m(H))=\pa f^{p-n+m}(H)$ for some integer $m\ge 0$ with $m-n\not\equiv 0\bmod p$. This is a contradiction.

From the above discussion, we know that if $K_1$ intersects $\pa(\mc{H}\sm H)$, then $K_0$ and $K_1$ are not contained in the same cycle of Julia components.

Now we assume that $K_1$ is disjoint from $\pa(\mc{H}\sm H)$. Then $K_1$ is disjoint from $\bigcup_{1\le n<p}f^n(B_0)$. If $K_1=f^{n_0}(K_0)$ for some integer $1\le n_0<p$, then $K_1$ contains $f^{n_0}(B_0)$. This is a contradiction. Thus in both cases, $K_0$ and $K_1$ are not contained in the same cycle of Julia components.

Applying McMullen's Theorem for $K_i$ ($i=0,1$). We obtain rational maps $g_i$ with Siegel disks. Thus $J_{g_i}$ contains critical values of $g_i$ \cite[Theorem 11.17]{milnor2006dynamics}. This implies that the orbit of $K_i$ contains critical values of $f$.
\end{proof}

\begin{proof}[Proof of Theorem \ref{thm:disconnected}]
If $f$ has no Herman rings and the Fatou set $F_f$ contains only one critical value, then every Fatou domain is simply connected. Thus $J_f$ is connected. This is a contradiction.
Thus either $f$ has Herman rings or $F_f$ contains at least two critical values. In the former case, there are two distinct cycles of Julia components, and both of them contain critical values of $f$ by Lemma \ref{lem:herman}.
\end{proof}

\begin{proof}[Proof of Theorem \ref{thm:main}]
Let $f$ be a cubic rational map with disconnected Julia set. Then $\#V_f\le 4$. By Theorem \ref{thm:disconnected}, either $F_f$ or periodic Julia components containing at least two critical values of $f$. Thus there are at most two critical values of $f$ contained in wandering Julia components. In other words, either every wandering Julia component contains at most one critical value, or $f$ has at most one full wandering Julia component containing critical values. By Theorem \ref{thm:2} and Corollary \ref{cor:onefull}, for any wandering Julia component $K$ of $f$, $\omC\sm f^n(K)$ has at most two components as $n\ge 0$ is large enough.
\end{proof}

\bibliography{wandering}{}
\bibliographystyle{abbrv}
\end{document}